
\documentclass[letterpaper, 10 pt, conference]{ieeeconf}  

\IEEEoverridecommandlockouts                              

\overrideIEEEmargins                                      



\usepackage{amsmath}
\usepackage{amssymb}
\usepackage{amstext}

\usepackage{graphicx}

\usepackage{algorithm}
\usepackage{algorithmic}

\usepackage{color}



\usepackage{url}

\usepackage{psfrag}

\usepackage[normalem]{ulem}


\usepackage{subcaption}

\usepackage{epstopdf}

\usepackage{booktabs}

\usepackage{tikz}
\usetikzlibrary{arrows}
\usepackage{verbatim}

\usepackage{standalone}

\usepackage{balance}


\usepackage{bm}


\usepackage{wasysym}

\usepackage{euscript}

\usepackage{latexsym}

\newcommand{\mR}{{\mathbb R}}
\newcommand{\mC}{{\mathbb C}}

\newcommand{\Linf}{L_\infty}

\newcommand{\Hinf}{\mathcal{H}_\infty}

\newcommand{\HinfC}{\Hinf(\mC_+)}
\newcommand{\quotientfield}{\mathcal{F}(\Hinf)}

\newcommand{\absfuncwzero}{\phi_{\bar{\tau}}}
\newcommand{\outerfuncwzero}{W_{\bar\tau}}

\newcommand{\real}{\Re}
\newcommand{\imag}{\Im}

\newcommand{\taumax}{\tau_{\rm max}}

\newtheorem{thm}{Theorem}

\newtheorem{lemma}[thm]{Lemma}

\graphicspath{{Figures/}}


\title{\LARGE \bf
Lower bounds on the maximum delay margin by analytic interpolation
}

\author{Axel Ringh$^{1}$, Johan Karlsson$^{1}$, and Anders Lindquist$^{2,1}$
\thanks{*This work was supported by the Swedish Research Council (VR),  grant 2014-5870, the ACCESS Linnaeus Center at KTH, and SJTU-KTH cooperation grant. }
\thanks{$^{1}$Division of Optimization and Systems Theory, Department of Mathematics, KTH Royal Institute of Technology, Stockholm, Sweden. {\tt\small johan.karlsson@math.kth.se}, {\tt\small aringh@kth.se}}%
\thanks{$^{2}$Department of Automation and School of Mathematics, Shanghai
Jiao Tong University, Shanghai, China. {\tt\small alq@math.kth.se}}%
}

\setlength{\voffset}{4pt}

\begin{document}

\maketitle
\thispagestyle{empty}
\pagestyle{empty}

\begin{abstract}

We study the delay margin problem in the context of recent works by T. Qi, J. Zhu, and J. Chen, where a sufficient condition for the maximal delay margin  is formulated in terms of an interpolation problem obtained after introducing a rational approximation. Instead we omit the approximation step and solve the same problem directly using techniques from function theory and analytic interpolation. Furthermore, we introduce a constant shift in the domain of the interpolation problem. In this way we are able to improve on their lower bound for the maximum delay margin.



\end{abstract}

\section{Introduction}

Time delays are ubiquitous in linear time invariant (LTI) systems, especially in networks, and may occur through communication delay, computational delay or physical transport delay. Consequently, systems with delay have been the subject of much study in systems and control; see, e.g.,  \cite{gu2003stability, michiels2007stability}, \cite{fridman2014introduction} and references therein.

This paper is devoted to the achievable delay margin in unstable control systems with time delay, a topic that has been studied in various contexts in, e.g., 
\cite{wang1994representation, chen1995new, huang2000robust, kao2007stability, middleton2007achievable, alterman2011robustness, qi2014fundamental, qi2017fundamental, ju2016further, ju2018achievable}. This problem is related to the gain margin and phase margin problems in robust control \cite{doyle1992feedback}, \cite{khargonekar1985non}, but the delay margin problem is  more complicated,  and many unsolved problems remain. Loosely speaking, we are looking for the largest time delay $\taumax$ such that  there exists an LTI controller that stabilizes  the time delay system for each delay in the interval $[0,\taumax)$. In general this is an unsolved problem, and results have been confined to obtaining upper and lower bounds for $\taumax$.  
In \cite{middleton2007achievable}  upper bounds for some simple systems are presented, but in general they are not tight. Methods for finding lower bounds based on different methods have been proposed, e.g, using robust control \cite{wang1994representation, huang2000robust}, integral quadratic constrains \cite{kao2007stability} (see also \cite{megretski1997system}), and analytic interpolation \cite{qi2014fundamental, qi2017fundamental}.

Our present paper builds on the approach in \cite{qi2014fundamental}, \cite{qi2017fundamental}, which formulates a sufficient condition for the maximum delay margin in terms of an interpolation problem with a real weight and obtains a lower bound using a rational approximation of the weight. In the present paper we instead reformulate the interpolation problem as an infinite dimensional analytic interpolation problem and solve it directly using techniques from function theory and complex analysis.
 This is related to work on discrete time systems in \cite{karlsson2010theinverse, karlsson2009degree}; methods that can also be used for control design and implementation.
In addition, by introducing a constant shift, we show that the lower bound can be further improved. In this short paper we concentrate on the delay margin itself and leave a deeper study of control implementation to a future paper. 

The outline of the paper is as follows. In Section~\ref{sec:delay_margin} we define the  delay margin problem and describe the results in \cite{middleton2007achievable}, \cite{qi2014fundamental}, \cite{qi2017fundamental}. In Section~\ref{sec:improving_lower_bound} we modify the approach of  \cite{qi2014fundamental}, \cite{qi2017fundamental} to obtain better lower bounds and provide an algorithm for this. This method is then improved in Section~\ref{sec:improved_method} by a simple shift of the corresponding complementary sensitivity function. Section~\ref{sec:example} is devoted to some numerical simulations. To facilitate comparison with the results in \cite{qi2017fundamental} we use some of the same systems as there. In Section~\ref{sec:control_implementation}  we provide a succinct discussion of control implementation, and in Section~\ref{sec:conclusions}  we discuss some possible future directions of research. 


\section{The delay margin problem}\label{sec:delay_margin}

Let $P(s)$ be the transfer function of a continuous-time, finite-dimensional, single-input-single-output LTI system, and 
consider the feedback control system depicted in Figure~\ref{fig:blockdiagram}. Here $e^{-\tau  s}$ is a delay, and $K(s)$ is  
a feedback controller in the class 
\[
\quotientfield := \left\{ \frac{N(s)}{D(s)} \; \Big| \; N,D \in \HinfC \text{ and } D(s) \not \equiv 0 \right\},
\]
where $\mC_+$ denotes the open right half plane, and $\HinfC$ denote the Hardy space of bounded analytic functions on $\mC_+$; see, e.g., \cite{foias1996robust}.
The basic problem in control theory is to find a $K(s)$ in this quotient field that stabilizes the closed loop system for a class of systems.

Let us first consider the standard problem without delay ($\tau=0$). The closed loop system is stable if 
\begin{equation}
\label{eq:stability}
1 + P(s)K(s) \neq 0 \quad \text{for all $s \in \bar{\mC}_+$},
\end{equation}
where $\bar{\mC}_+$ is the closed right half plane. This is equivalent to that the sensitivity function
\[
S(s): = (1 + P(s)K(s))^{-1}
\]
belongs to $\Hinf$, which in turn is equivalent to $T\in\Hinf$, where 
\[
T(s) := 1-S(s) = P(s)K(s) \big(1 + P(s)K(s) \big)^{-1}
\]
is the complementary sensitivity function \cite{doyle1992feedback}.  The feedback system is {\em internally stable\/} if, in addition, there is no pole-zero cancellation between $P$ and $K$ in $\bar{\mC}_+$ \cite[pp. 35-36]{doyle1992feedback}. Assuming for simplicity that the poles and zeros are distinct, this is equivalent to the interpolation conditions%
\footnote{If the poles and zeros are not distinct the interpolation conditions need to be imposed with multiplicity \cite{youla1974single}.}
\begin{subequations}\label{eq:interpolation}
\begin{align}
  &   T(p_j) = 1,\quad j = 1,\ldots, n , \\
  &  T(z_j)= 0,\quad j = 1,\ldots, m,
\end{align}
\end{subequations}
where $p_1, \ldots, p_n$  are the unstable poles and $z_1, \ldots, z_m$ the nonminimum phase zeros of $P$, respectively; see, e.g., \cite{youla1974single}, \cite[Chapters 2 and 7]{helton1998classical}. In the sequel we shall simply say that $K$ stabilizes $P$ when all these conditions are satisfied. 
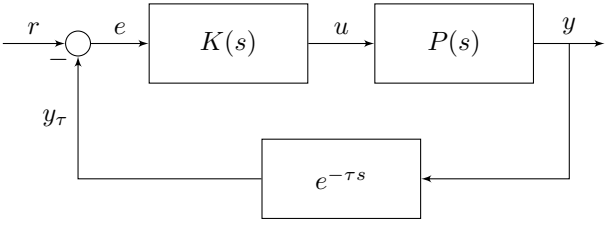
\begin{figure}[tb]
\tikzstyle{int}=[draw, minimum size=2em]
\tikzstyle{init} = [pin edge={to-,thin,black}]
\tikzstyle{block} = [draw, rectangle, 
    minimum height=3em, minimum width=6em]
\tikzstyle{sum} = [draw, circle, node distance=1cm]
\tikzstyle{input} = [coordinate]
\tikzstyle{output} = [coordinate]
\tikzstyle{pinstyle} = [pin edge={to-,thin,black}]

\begin{tikzpicture}[auto, node distance=2cm,>=latex']
    \node [input, name=input] {};
    \node [sum, right of=input] (sum) {};
    \node [block, right of=sum] (controller) {$K(s)$};
    \node [block, right of=controller, node distance=3cm] (system) {$P(s)$};

    \draw [->] (controller) -- node[name=u] {$u$} (system);
    \node [output, right of=system] (output) {};
    \node [block, below of=u] (delay) {$e^{-\tau s}$};

    \draw [draw,->] (input) -- node {$r$} (sum);
    \draw [->] (sum) -- node {$e$} (controller);
    \draw [->] (system) -- node [name=y] {$y$}(output);
    \draw [->] (y) |- (delay);
    \draw [->] (delay) -| node[pos=0.99] {$-$} 
        node [near end] {$y_\tau$} (sum);
\end{tikzpicture}
\caption{Block diagram representation of an LTI system with time delay.}
\label{fig:blockdiagram}
\end{figure}

If $K$ stabilizes $P$, by continuity it  also stabilizes $Pe^{-\tau s}$ for sufficiently small $\tau >0$. The question is how large $\tau$ can be while retaining internal stability. Following \cite{middleton2007achievable} we define the \emph{delay margin} for a given controller $K$ as
\begin{align*}
DM(P, K) & := \sup_{\tau \geq 0} \; \tau \\
  & \text{such that } K \text{ stabilizes } Pe^{-t s} \text{ for } t \in [0, \tau],
\end{align*}
and the \emph{maximum delay margin} for a plant $P$ as
\begin{align*}
\taumax = DM(P) & := \sup_{K \in \quotientfield} \; DM(P, K).
\end{align*}
This means that $\taumax$ is the largest value such that for any $\bar{\tau} < \taumax$ there exists a controller $K$ that stabilizes the plant $P$ for all $\tau$ in the  interval $[0, \bar{\tau}]$. 
If the plant $P$ is stable we trivially have $\taumax = \infty$, since $K \equiv 0$ stabilizes it, and thus we shall only consider unstable plants.

To determine $\taumax$ is in general a hitherto unsolved problem, but work has been done to obtain lower and upper bounds.

\subsection{Upper bounds for maximum delay margin}

In \cite{middleton2007achievable} it was shown that for any strictly proper real-rational plant $P$ with unstable poles in $re^{\pm i \theta}$, $r >0$ and $\theta \in [0, \pi/2]$, there is an upper bound $\bar{\tau}$ for $\taumax$ given by
\begin{equation}\label{eq:U_bounds}
\bar{\tau} = \frac{1}{r} \Big(\pi \sin(\theta) + 2\max\left\{\cos(\theta), \; \theta \sin(\theta) \right\} \Big)
\end{equation}
 \cite[Thm. 7, 9 and 11]{middleton2007achievable}.
Moreover, this upper bound is in fact shown to be tight in the special cases of either exactly one real unstable pole or exactly two conjugate unstable poles.
These results are the first that show that there is an upper bound on the achievable delay margin when using LTI controllers, and they describe a region for the delay where stabilization is not possible. However, the provided bounds of the maximum delay margin are in general not tight, and have lately also been improved upon in \cite{ju2016further, ju2018achievable}.

\subsection{Lower bounds for maximum delay margin}

To ensure stability we are in general more interested in a lower bound $\bar{\tau}\leq \taumax$. This problem is considered in the recent papers \cite{qi2014fundamental,qi2017fundamental}, where an approach based on analytic interpolation and rational approximations is taken. The starting point is that \eqref{eq:stability} can be written
\begin{equation}\label{eq:nec_and_suff_cond}
1+ T(s)(e^{-\tau s}-1)\neq 0 \quad \text{for  $s\in \bar{\mC}_+$},
\end{equation}
where $T$ is the complementary sensitivity function.  A sufficient condition for \eqref{eq:nec_and_suff_cond} to hold for all $\tau$ on an interval $[0,\bar{\tau}]$ is that
\begin{equation} \label{eq:suff_cond0}
\sup_{\tau\in [0, \bar{\tau}]} \; \inf_{\substack{T \in \Hinf \\ \text{subject to } \eqref{eq:interpolation}}} \|T(s)(e^{-\tau s}-1)\|_{\Hinf}<1.  
\end{equation}
Now, since $\sup  \, \inf \leq \inf \, \sup$, this condition holds whenever
\begin{equation}\label{eq:suff_cond}
\inf_{\substack{T \in \Hinf \\ \text{subject to } \eqref{eq:interpolation}}} \|T(i \omega) \phi_{\bar{\tau}}(\omega)  \|_{\Linf}<1,
\end{equation}
where
\begin{align}
\phi_{\bar{\tau}}(\omega) & =  \sup_{\tau\in [0, \bar{\tau}]}  | e^{-i\tau \omega}-1| \nonumber \\
& = \begin{cases}2\left|\sin(\frac{\bar{\tau} \omega}{2})\right| &\mbox{ for } |\omega \taumax| \le \pi\\
2 &\mbox{ for } |\omega \taumax| > \pi.
\end{cases} \label{eq:w_tau}
\end{align}
In \cite{qi2017fundamental} the function $\phi_{\bar{\tau}}$ is approximated by  the magnitude of  a rational function  $w_{\bar{\tau}}$ such that  $\phi_{\bar{\tau}}(\omega)\leq |w_{\bar{\tau}}(i\omega)|$ for all $\omega$. Using this approximation and the interpolation conditions on $T$ for internal stability the authors derive an algorithm for computing the largest $\bar{\tau}$ for which \eqref{eq:suff_cond} holds. This thus gives a lower bound for the maximum delay margin.

\section{Formulating  and solving \eqref{eq:suff_cond} using\\ analytic interpolation}\label{sec:improving_lower_bound}


In this section we will solve the problem \eqref{eq:suff_cond} directly using analytic interpolation without resorting to approximation of $\phi_{\bar{\tau}}(\omega)$ via rational functions. 
Continuing in the manner of \cite{qi2017fundamental} we note that  \eqref{eq:suff_cond}, the sufficient condition for the closed loop system to be internally stable for all $\tau\in [0, \bar{\tau}]$, holds if there exists a $T(s) \in \HinfC$ such that
\begin{equation}\label{eq:basic_interpolation_problem_1}
\| T(i\omega) \phi_{\bar{\tau}}(\omega) \|_{\Linf} \! <  1
\; \text{and}
\begin{cases} T(p_j)=1, \; j = 1, \ldots, n, \\ T(z_j)=0, \; j = 1, \ldots, m.
\end{cases} \!\!\!\!\!\!\!
\end{equation}
Next, we may replace $\phi_{\bar{\tau}}$ by the outer function  $W_{\bar{\tau}}\in \HinfC$ with the same magnitude as $\phi_{\bar{\tau}}$ on $i\mR$ \cite[p. 133]{hoffman1962banach}, and we arrive at the equivalent problem  
\begin{equation}\label{eq:basic_interpolation_problem_15}
\| T W_{\bar{\tau}} \|_{\Hinf} < 1
\; \text{and}
\begin{cases} T(p_j)=1, \; j = 1, \ldots, n, \\ T(z_j)=0, \; j = 1, \ldots, m,
\end{cases}
\end{equation}
where
\begin{equation}
\label{eq:outerrepr}
W_{\bar{\tau}}(s) = \exp \! \left[ \frac{1}{\pi} \int_{-\infty}^\infty \!\! \log \big( \phi_{\bar{\tau}}(\omega) \big) \frac{ \omega s + i}{\omega + is} \frac{1}{1+\omega^2} \,d\omega \right] \!\! .
\end{equation}
 Observing that $W_{\bar{\tau}}$ is outer, and setting $\tilde{T}:=TW_{\bar{\tau}}$, \eqref{eq:basic_interpolation_problem_15}
is seen to be equivalent to 
\begin{equation}\label{eq:basic_interpolation_problem_2}
\| \tilde T \|_{\Hinf} \! < 1
\; \text{and}
\begin{cases}
 \tilde T(p_j)=W_{\bar{\tau}}(p_j), &j = 1, \ldots, n, \\
 \tilde T(z_j)=0,  &j = 1, \ldots, m,
 \end{cases}
\end{equation}
and thus the only way the weight enters is through the values of the outer function $W_{\bar{\tau}}$ at the pole locations $p_j$ \cite[Section 4.C]{karlsson2010theinverse} (cf. \cite{karlsson2009degree}).
Since $W_{\bar{\tau}}$ is outer, no unstable poles or nonminimum-phase zeros have been added in $\mC_+$.

Hence we have reduced the problem to determining whether there exists a $\tilde T \in \Hinf$ such that \eqref{eq:basic_interpolation_problem_2} holds. The values $W_{\bar{\tau}}(p_j)$,  $ j = 1, \ldots, n$, can be computed from \eqref{eq:outerrepr} by numerical integration. Then setting 
\begin{subequations}\label{eq:pick_values}
\begin{align}
    &v := [p_1, \ldots, p_n, z_1, \ldots, z_m]   \\
    & w := [W_{\bar{\tau}}(p_1), \ldots, W_{\bar{\tau}}(p_n), 0, \ldots, 0] ,
\end{align}
\end{subequations}
 the interpolation problem \eqref{eq:basic_interpolation_problem_2} is solvable if and only if the corresponding Pick matrix
\begin{equation}\label{eq:pick_matrix}
\text{Pick}(v,w) := \begin{bmatrix}
\frac{1-w_j\bar w_k}{v_j+\bar{v}_k}
\end{bmatrix}_{j,k = 1}^{n+m}
\end{equation}
is positive definite;  see, e.g., \cite[pp. 151-152]{doyle1992feedback}. In case the poles and zeros are not distinct, \eqref{eq:pick_matrix} needs to be replaced by a more general criterion, e.g., using the input-to-state framework \cite{byrnes2001ageneralized,georgiou2002structure}
 as in \cite{blomqvist2005optimization}.

We have thus shown that for a given $\bar \tau$, the problem \eqref{eq:suff_cond} has a solution if and only if the Pick matrix \eqref{eq:pick_matrix} with interpolation values \eqref{eq:pick_values} is positive definite. Moreover, if \eqref{eq:suff_cond} has a solution for some $\bar{\tau}$ then clearly it has a solution for any smaller value, since $\phi_{\bar{\tau}}(\omega)$ is point-wise nondecreasing in $\bar{\tau}$. Therefore the optimal $\bar \tau$ 
can be computed using the bisection algorithm, iteratively testing feasibility of \eqref{eq:suff_cond}. The method is summarized in Algorithm~\ref{alg:first_method}.  
Note that by \eqref{eq:U_bounds} we have $2\pi/\max_j(|p_j|)\ge\tau_{\rm max}$, which gives a valid choice for the initial upper bound
in the bisection algorithm.

\renewcommand{\algorithmicrequire}{\textbf{Input:}}
\renewcommand{\algorithmicensure}{\textbf{Output:}}
\algsetup{indent=12pt}

\begin{algorithm}
\caption{Lower bound on maximum delay margin}
\label{alg:first_method}
\begin{algorithmic}[1]
\REQUIRE Unstable poles $p_j$,  $j = 1, \ldots, n$, and nonminimum phase zeros $z_j$,  $j = 1, \ldots, m$, of the plant $P$.

\STATE $\tau_- = 0.$
\STATE $\tau_+=2\pi/\max_j(|p_j|)$, 
\WHILE{$\tau_+ - \tau_- > \texttt{tol}$}
\STATE $\tau_{\rm mid} = (\tau_+ + \tau_-)/2$
\STATE Compute new interpolation values $W_{\tau_{\rm mid}}(p_j)$
\IF{Pick matrix \eqref{eq:pick_matrix} with values \eqref{eq:pick_values} is positive definite}
\STATE $\tau_- = \tau_{\rm mid}$
\ELSE
\STATE $\tau_+ = \tau_{\rm mid}$
\ENDIF
\ENDWHILE
\STATE $\bar{\tau}
 = \tau_-$
\ENSURE $\bar{\tau}
$, lower bound on maximum delay margin
\end{algorithmic}
\end{algorithm}

 The improvement of this method over that in \cite{qi2017fundamental} depends on how well the magnitude of the fifth-order approximation $w_{6\tau}(i\omega)$ used in \cite{qi2017fundamental} fits $\phi_{\bar{\tau}}(\omega)$ for $\omega\in \mR$. To illustrate this, the relative error for $\bar{\tau} = 1$ is shown in Figure~\ref{fig:approx}. In this particular case only a minor improvement in the lower bound is expected. 


However, our formulation of the problem allows for adding further constraints to the interpolation problem. This can be done in order to shape the sensitivity function, similarly to what has been done for discrete time systems in \cite{karlsson2010theinverse}. In the current setting this can be achieved by letting $\phi_{\rm design}$ be the modulus on the imaginary axis of the designed weight function and by  considering $\| T(i\omega)\phi_{\rm max}(\omega) \|_{\Linf} < 1$ in \eqref{eq:basic_interpolation_problem_1} instead, where $\phi_{\rm max}(\omega) = \max\{ \phi_{\bar\tau}(\omega), \phi_{\rm design}(\omega)\}$.

%

\begin{figure}[tb]
  \centering
  \includegraphics[width=0.4\textwidth]{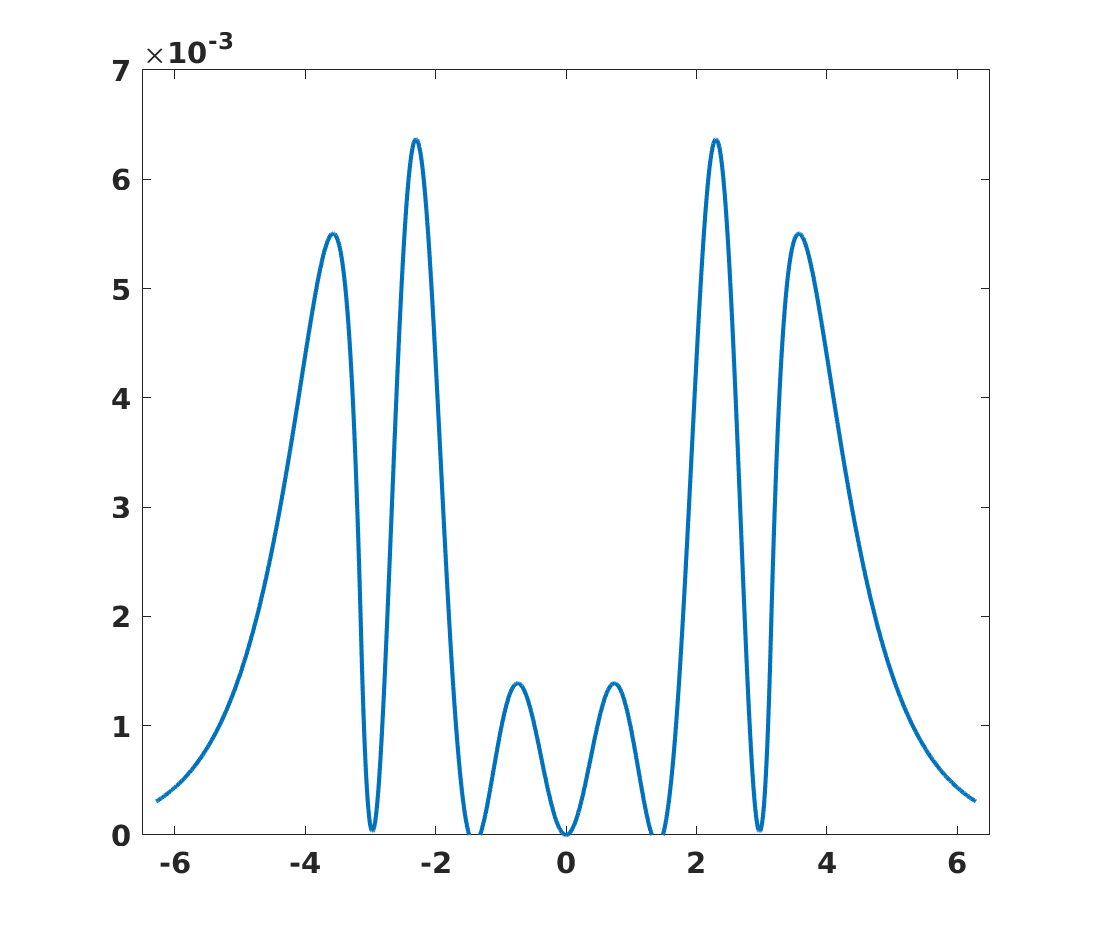}
  \caption{Relative error between $\phi_{\bar{\tau}}$ and the magnitude of fifth-order approximation $w_{6\tau}$ in \cite{qi2017fundamental}, for $\bar{\tau}
 = 1$. The relative error is given point-wise by $\big(|w_{6\tau}(i\omega)| - \phi_{\bar{\tau}}(\omega) \big) / \phi_{\bar{\tau}}(\omega)$.}
  \label{fig:approx}
\end{figure}

\section{Improving the lower bound \\using a constant shift}\label{sec:improved_method}

Consider the constraint $\| T(i\omega) \phi_{\bar{\tau}}(\omega) \|_{\Linf} < 1$ in \eqref{eq:basic_interpolation_problem_1}. For each $\omega$ the image of the complementary sensitivity function, $T(i\omega)$, is confined to a ball centered at the origin and with radius $| \phi_{\bar{\tau}}(\omega)^{-1}|$. However, choosing the center of the ball at the origin is quite arbitrary, and by instead carefully selecting the center elsewhere, we may improve the estimate of the lower bound. To this end, let $T = \hat{T} + w_0$ where $w_0 \in \mathbb{C}$. The condition \eqref{eq:nec_and_suff_cond} can then be written
\begin{equation}
\label{eq:nec_and_suff_cond_modified}
\hat{T}(s) \big( e^{-\tau s} - 1 \big) \neq -1 + w_0 - w_0e^{-\tau s}.
\end{equation}
Here the right hand side is an $\Hinf$ function, and it is nonzero in all of $\bar{\mC}_+$ if and only if $\real(w_0) < 1/2$, as can be seen from Lemma~\ref{lem:Rew0<1/2} in the appendix. Consequently, for $\real(w_0) < 1/2$, the inverse is an $\Hinf$ function and thus \eqref{eq:nec_and_suff_cond_modified} can be written as
\begin{equation}\label{eq:new_nec_and_suff_cond}
\hat{T}(s) \frac{e^{-\tau s} - 1}{1 - w_0 + w_0e^{-\tau s}} \neq -1.
\end{equation}
Hence we need modify the function $\absfuncwzero$ in Section~\ref{sec:improving_lower_bound} to read
\[
\absfuncwzero(\omega) := \sup_{\tau \in [0, \bar{\tau}
]} \left| \frac{e^{-\tau i\omega} - 1}{1 - w_0 + w_0e^{-\tau i\omega}} \right|,
\]
which reduces to \eqref{eq:w_tau} when $w_0=0$.
Then using the same argument as before, we see that 
\[
\| \hat{T}(i \omega) \absfuncwzero(\omega) \|_{\Linf}<1
\]
is  a sufficient condition for  \eqref{eq:new_nec_and_suff_cond} to hold. 

As shown in the appendix, $\absfuncwzero(\omega)$ can be determined in closed form, i.e., 
\begin{equation}\label{eq:absfuncwzero}
\absfuncwzero(\omega)^{-1} =
\begin{cases}
0.5 - \real(w_0), & \!\! \omega \geq \bar{\omega}_+, \\
\left|0.5 - i0.5\cot({\omega \bar{\tau}/2}) - w_0 \right|, & \!\! \bar{\omega}_+ \! > \! \omega \! > \! \bar{\omega}_-, \\
0.5 - \real(w_0), & \!\! \omega \leq \bar{\omega}_-,
\end{cases}
\end{equation}
where $\bar{\omega}_+$ and $\bar{\omega}_-$ are defined as follows: first define 
\[
\bar{\omega} := \frac{2}{\bar{\tau}
}\cot^{-1}(-2 \cdot \imag(w_0)),
\]
where we set $\cot^{-1}(0) = \pi/2$. Moreover, note that $\bar{\omega} \neq 0$ for any finite $w_0$. 
Next, define $\bar{\omega}_+$ and $\bar{\omega}_-$ by first setting $\bar{\omega}_+ = \bar{\omega}$ if $\bar{\omega} > 0$ or  $\bar{\omega}_- = \bar{\omega}$ if $\bar{\omega} < 0$ and then defining  the remaining variable via
\[
\bar{\omega}_+ = \bar{\omega}_- + 2\pi/\bar{\tau}.
\]

Following the procedure in Section~\ref{sec:improving_lower_bound} we define, via the representation \eqref{eq:outerrepr}, an outer function $\outerfuncwzero(s)$ with the property $|\outerfuncwzero(i\omega)|=\absfuncwzero(\omega)$ for all points on the imaginary axis. Consequently, we are left with the problem to find a $\hat T$ such that
\begin{equation*}
\|\hat T \outerfuncwzero \|_{\Hinf}<1
\; \text{and} \;
\begin{cases}
\hat T(p_j)=1-w_0,  &j = 1, \ldots, n, \\
\hat T(z_j)=-w_0,  &j = 1, \ldots, m,
\end{cases}
\end{equation*}
which, in turn,  is equivalent to
\begin{equation*}
\|\tilde T\|_{\Hinf} \!\! < 1
\; \text{and}
\begin{cases}
\! \tilde T(p_j)=(1-w_0)\outerfuncwzero(p_j),  &\!\! j = 1, \ldots, n, \\
\! \tilde T(z_j)=-w_0 \outerfuncwzero(z_j),  &\!\! j = 1, \ldots, m.
\end{cases}
\end{equation*}
In the same manner as in Section~\ref{sec:improving_lower_bound} we can then determine feasibility by checking whether the corresponding Pick matrix \eqref{eq:pick_matrix} is positive definite. A refined algorithm for computing a lower bound for the maximum delay margin is thus obtained by suitable changes in Algorithm~\ref{alg:first_method}. 

\section{Numerical example}\label{sec:example}
In this section we investigate the performance of the method proposed in Section~\ref{sec:improved_method} on some examples. To facilitate comparison with the results of \cite{qi2017fundamental} we consider the various SISO-systems given in \cite[Ex.1]{qi2017fundamental}.

\subsection{Systems with one unstable pole and one nonminimum phase zero}
We begin with the system \cite[Eq. (41)]{qi2017fundamental}, i.e.,
\begin{equation}\label{eq:chen_ex_1_2}
P(s)=\frac{s-z}{s-p},
\end{equation}
where $z,p > 0$. As in \cite{qi2017fundamental} we set $z=2$ and compute an estimate for the delay margin for different values of $p$ in the interval $[0.3, 4]$. Results are shown in Figure~\ref{fig:ex2}. From this we can see that with $w_0 = -10$ we get a considerable improvement over the bound in \cite{qi2017fundamental} in the region $p< z = 2$, and in this case we get close to the theoretical bound from \cite{middleton2007achievable} (which is tight in this region). However, with $w_0 = -10$ our method seems to perform worse than \cite{qi2017fundamental} in the region $p > z = 2$. On the other hand, in this region the value $w_0=0.35$ achieves some improvement. Note that the true stability margin is, to the best of our knowledge, still unknown in this region.


\begin{figure}[tb]
  \centering
  \includegraphics[width=0.475\textwidth]{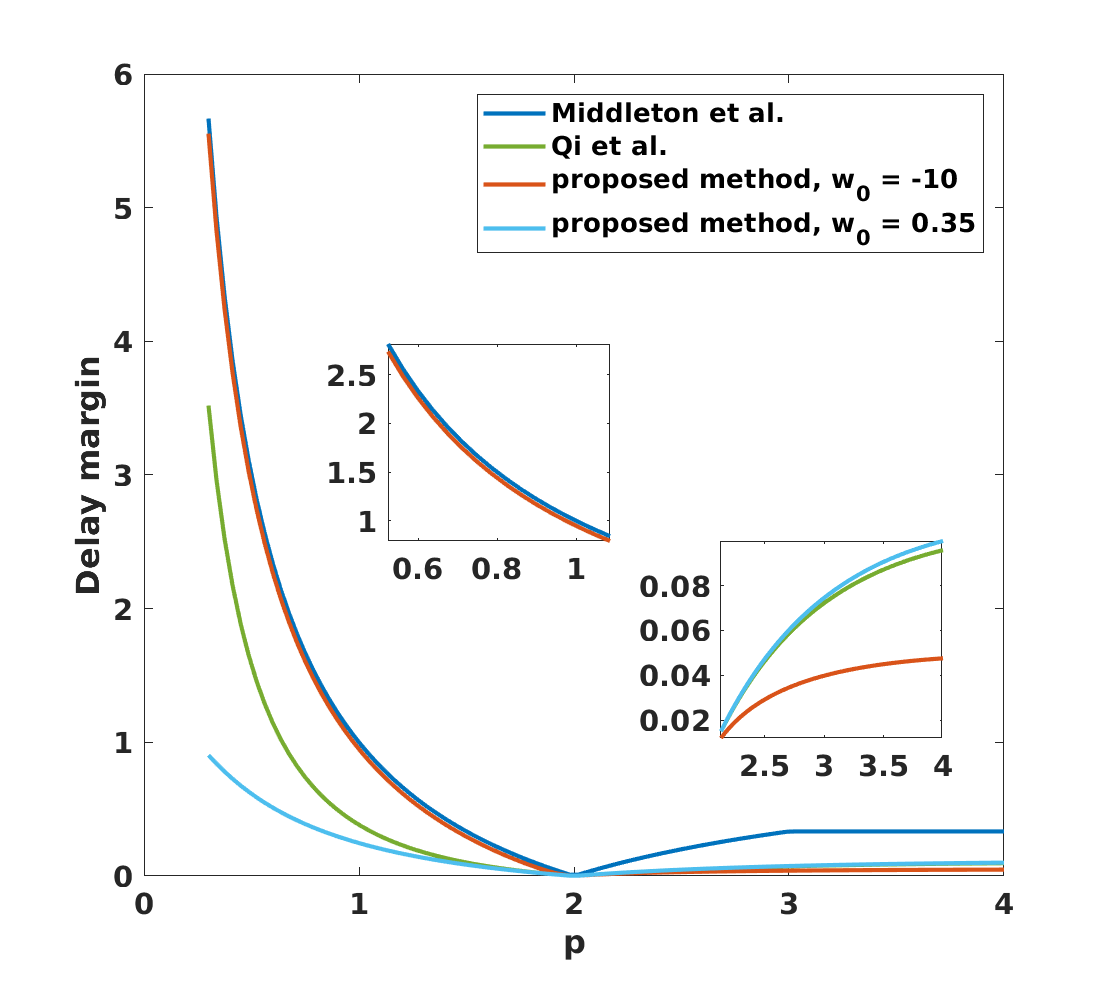}
  \caption{Results for the example in \eqref{eq:chen_ex_1_2}.}
  \label{fig:ex2}
\end{figure}

The system \cite[Eq. (42)]{qi2017fundamental}, given by
\begin{equation}\label{eq:chen_ex_1_3}
P(s)=0.1\frac{(0.1s-1)(s+0.1659)}{(s-0.1081)(s^2+0.2981s+0.06281)},
\end{equation}
has similar characteristics as the previous example, with one unstable pole ($p=0.1081$) and nonminimim phase zero ($z=10$). Also in this case our method gives a considerable improvement over \cite{qi2017fundamental} when $w_0$ is selected to be negative, and as $w_0$  tends to $-\infty$ our bound seems to approach the theoretical bound $2/0.1081 - 2/10 \approx 18.3$ from \cite{middleton2007achievable}; see Figure~\ref{fig:ex3}. 

\begin{figure}[tb]
  \centering
  \includegraphics[width=0.475\textwidth]{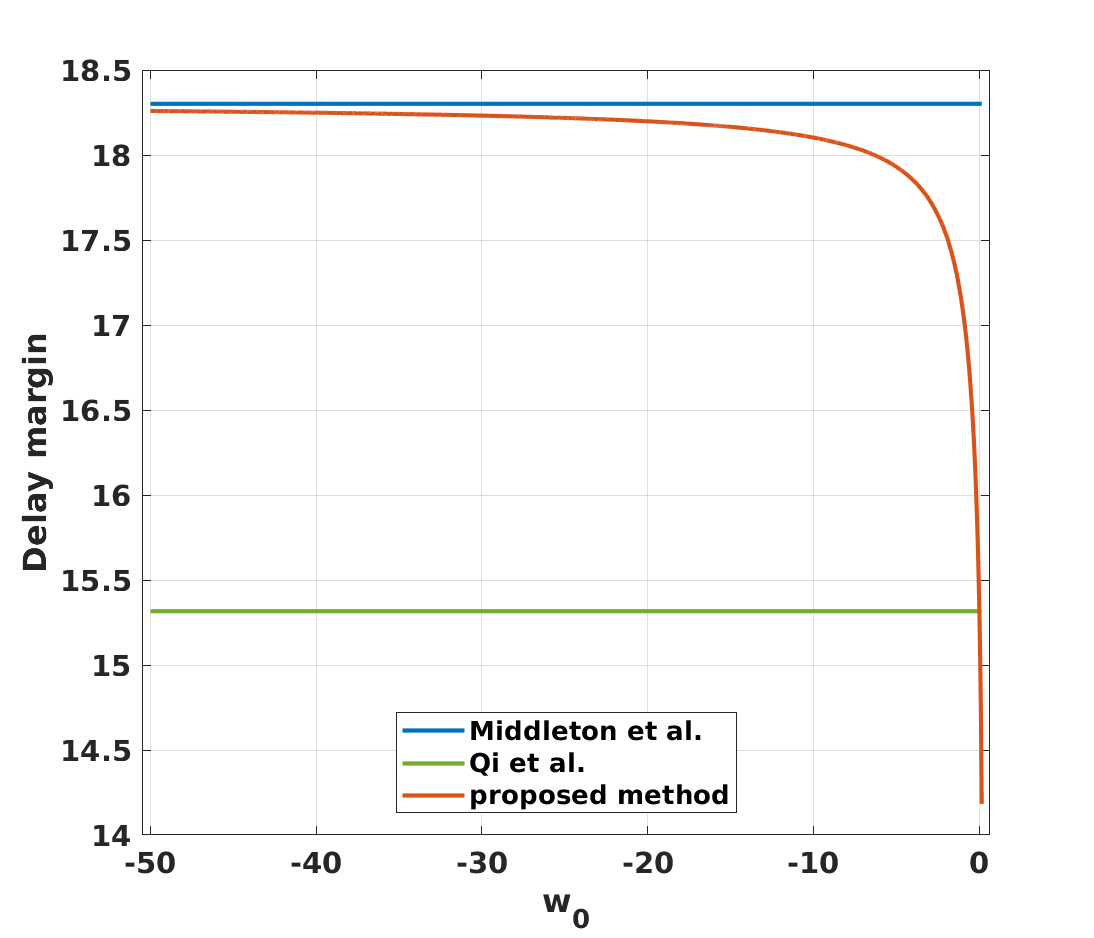}
  \caption{Results for the example in \eqref{eq:chen_ex_1_3}, with $w_0$ real. When $w_0$ goes to $-\infty$ we seem to get arbitrarily close to the result by Middleton et al. \cite{middleton2007achievable}, while for $w_0 > 0$ the bound deteriorate quickly.}
  \label{fig:ex3}
\end{figure}


\begin{figure*}[tb]
\begin{center}
\begin{subfigure}{.49\textwidth}
 \centering
 \includegraphics[width=\textwidth]{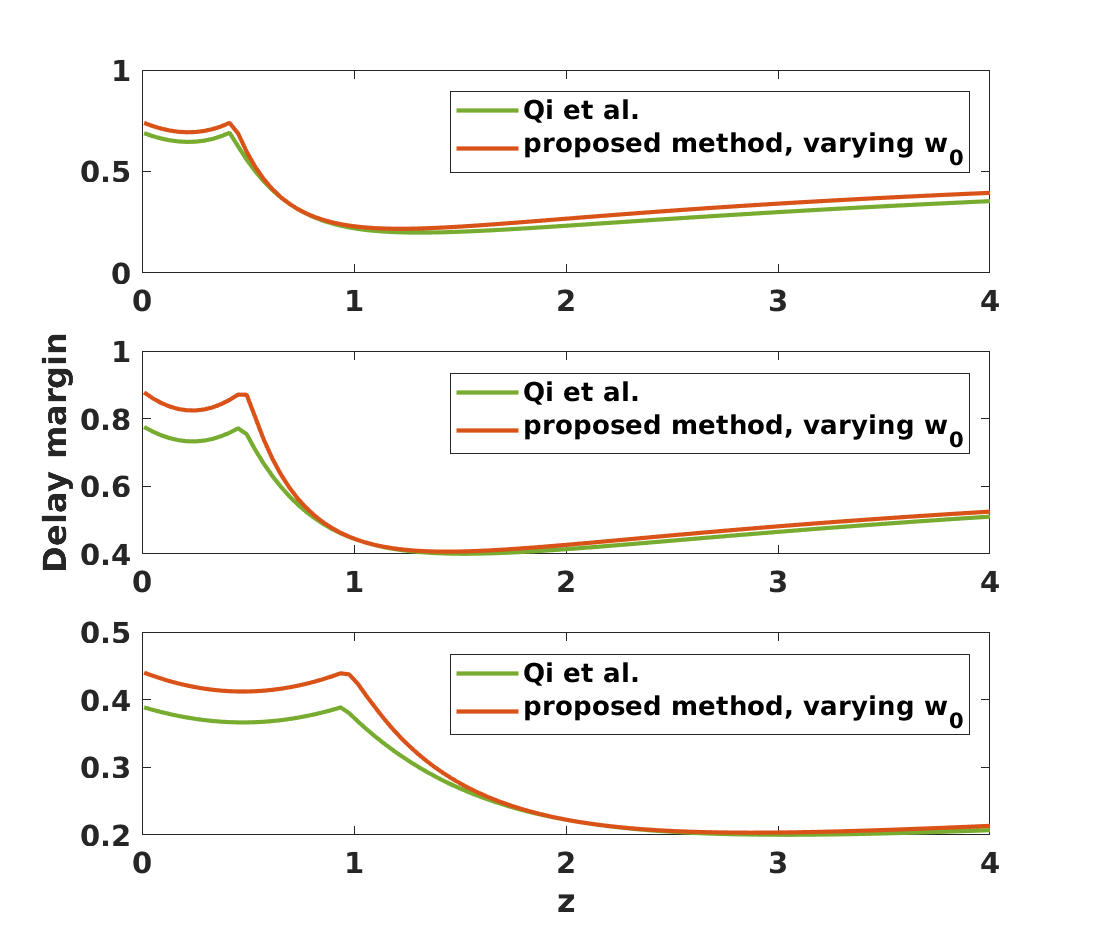}
 \subcaption{Estimates of the delay margin for the cases, from top to bottom,  $(r, \theta) = (1, \pi/4)$, $(r, \theta) = (1, \pi/3)$, and  $(r, \theta) = (2, \pi/3)$.}
 \label{subfig:ex4_delay_margin}
\end{subfigure}
\begin{subfigure}{.49\textwidth}
\centering
  \includegraphics[width=\textwidth]{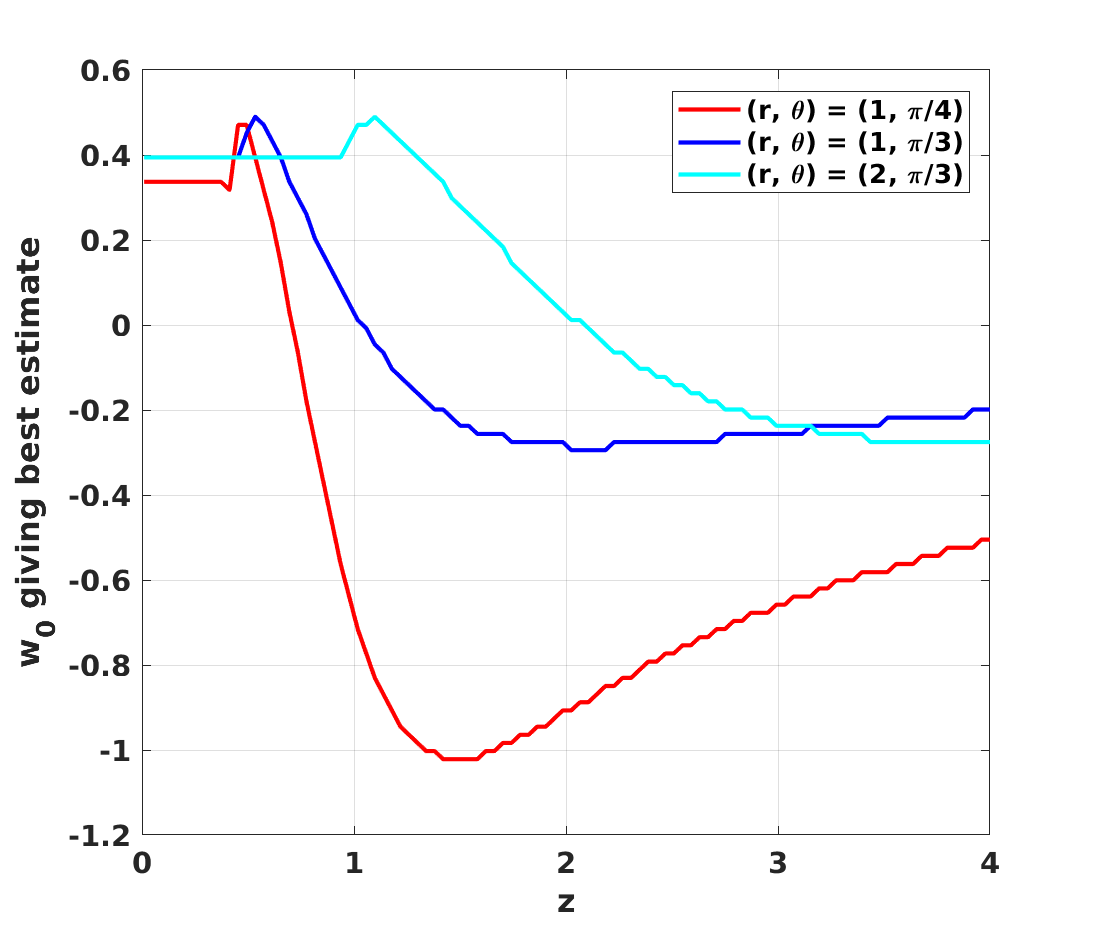}
 \subcaption{Best choice of $w_0$ as function of the zero position $z$.}
 \label{subfig:ex4_z_w0}
 \end{subfigure}
\caption{Reults for the example in \eqref{eq:chen_ex_1_4}.}\label{fig:ex4}
\end{center}
\end{figure*}

\subsection{System with two unstable real poles}
Next we consider the system \cite[Eq. (40)]{qi2017fundamental}, given by
\[
P(s)=\frac{1}{(s-p_1)(s-p_2)}.
\]
In this case $p_1$ is fixed to $0.2$, and the delay margin computed for different values of $p_2 \in [0.1, 3]$.
Then for values of $w_0 \in [-10, 0.5)$ only minor improvements over the result in \cite{qi2017fundamental} are achieved; for the corresponding optimal choice of $w_0$, the improvements are between $0.19\%$ and $2.9\%$ depending on $p_2$.


\subsection{System with conjugate pair of complex poles}
Finally we consider the system \cite[Eq. (45)]{qi2017fundamental}, which has a pair of unstable complex poles and a nonminimum phase zero. This system is given by 
\begin{equation}\label{eq:chen_ex_1_4}
P(s)=\frac{s-z}{(s-re^{i\theta})(s-re^{-i\theta})},
\end{equation}
and we compute an estimate of the delay margin for three fixed values of the pair $(r, \theta)$, namely for $(r, \theta) = (1, \pi/4)$, $(r, \theta) = (1, \pi/3)$, and $(r, \theta) = (2, \pi/3)$. Moreover, for these values of $(r, \theta)$ we vary $z$ in $[0.01, 4]$ and for each value of $z$ we investigate all values of $w_0 \in [-1.5, 0.5)$ (with steps $0.02$) to find the $w_0$ that maximizes the estimated delay margin. Results are shown in Figure~\ref{fig:ex4}, where Figure~\ref{subfig:ex4_delay_margin} shows the estimated delay margin and Figure~\ref{subfig:ex4_z_w0} shows the corresponding best value of $w_0$. The proposed method gives significantly improved bounds in some regions, for example when $\theta=\pi/3$ and $z$ is small compared to $r$.

\section{On the control implementation}\label{sec:control_implementation} 

There are certain problems with the implementation of the stabilizing controller that need attention. The complementary  sensitivity function is given by 
\begin{equation}
\label{eq:Ttilde2T}
T(s)=\tilde{T}(s) \outerfuncwzero (s)^{-1}+w_0.
\end{equation}
Indeed, since $\outerfuncwzero$ is outer, it is nonzero in $\mC_+$, and hence it can be inverted there. However, since $\outerfuncwzero(0)=0$, $T$ typically has a pole in $s = 0$, and therefore the closed loop system may not be stable (cf. \cite[p. 36]{doyle1992feedback}). 
This can be rectified  by replacing $\phi_{\bar{\tau}}$ by 
\[
\phi_{\bar{\tau}, {\varepsilon}}(\omega) =\max(\varepsilon, \phi_{\bar{\tau}}(\omega))
\]
for $\varepsilon > 0$.
This will give a stable system and, by continuity,  as $\varepsilon \to 0$ we can obtain a maximum delay margin estimate arbitrary close to $\bar{\tau}$. 

Selecting $\bar \tau$ to be the supremum for which \eqref{eq:suff_cond} holds gives rise to a singular Pick matrix \eqref{eq:pick_matrix} and a unique solution $\tilde T$ which is  a Blaschke product \cite[pp. 5-9]{garnett2007bounded}, so  $\| \tilde T \|_{\Hinf} =1$. Such a solution will not satisfy \eqref{eq:suff_cond} and thus may not have delay margin $\bar\tau$. However, for any $\bar \tau$ smaller than the supremum the Pick matrix is positive definite and the analytic interpolation problem (e.g., \eqref{eq:basic_interpolation_problem_2}) has infinitely many rational solutions \cite{byrnes2001ageneralized}, \cite{fanizza2007passivity}. We must now choose such a solution appropriately so that the stabilizing controller  
\begin{equation}
\label{eq:K}
K=P^{-1}(\tilde{T} +(1-w_0)W_{\bar{\tau}})^{-1}(\tilde{T} +w_0W_{\bar{\tau}}),
\end{equation}
is a rational function and thus can be implemented by a finite-dimensional system. Hence, unlike the approach in \cite{qi2014fundamental,qi2017fundamental}, an approximation may be needed to design the controller.
Again, methods similar to the ones presented in \cite{karlsson2010theinverse} can be used to obtain such an approximation, but details are left for a forthcoming paper.




\section{Conclusions and future directions}\label{sec:conclusions}
In this work we build on the approach in \cite{qi2014fundamental}, \cite{qi2017fundamental} for computing a lower bound for the maximum delay margin of a system. We introduce a parameter that can be tuned to improve the bounds, and in numerical examples we can in some cases come (arbitrarily) close to the true upper bound.
Subsequent work will focus on why this is the case, but also on how to tune the method and how to construct implementable controllers; the latter by following along the lines of \cite{byrnes2001ageneralized}, \cite{fanizza2007passivity}, \cite{karlsson2010theinverse}.


\section*{Acknowledgement}
We would like to thank Jie Chen for introducing us to the problem and for helpful discussions. We would also like to thank the referees for useful suggestions and comments.

\appendix

\subsection{A bound on $\Re(w_0)$ }

\begin{lemma}\label{lem:Rew0<1/2}
For $\tau > 0$ the function $h(s) = -1 + w_0 - w_0 e^{-\tau s}$ is nonzero in $\bar{\mC}_+$ if and only if $\real(w_0) < 1/2$.
\end{lemma}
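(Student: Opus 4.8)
The plan is to analyze when $h(s) = -1 + w_0 - w_0 e^{-\tau s} = (w_0 - 1) - w_0 e^{-\tau s}$ vanishes on $\bar{\mC}_+$, treating the two implications separately.

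First I would handle the "only if" direction by exhibiting a zero whenever $\real(w_0) \geq 1/2$. Setting $h(s) = 0$ requires $e^{-\tau s} = (w_0-1)/w_0 = 1 - 1/w_0$ (the case $w_0 = 0$ is trivial since then $h \equiv -1 \neq 0$, consistent with $0 < 1/2$). Since $|e^{-\tau s}| = e^{-\tau \real(s)} \leq 1$ on $\bar{\mC}_+$ and ranges over all of $(0,1]$ as $\real(s)$ ranges over $[0,\infty)$, while $\arg(e^{-\tau s}) = -\tau\imag(s)$ can be made arbitrary, the equation $e^{-\tau s} = 1 - 1/w_0$ has a solution $s \in \bar{\mC}_+$ precisely when $|1 - 1/w_0| \leq 1$. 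So the key computation is: $|1 - 1/w_0| \leq 1 \iff |w_0 - 1|^2 \leq |w_0|^2 \iff -2\real(w_0) + 1 \leq 0 \iff \real(w_0) \geq 1/2$. Thus for $\real(w_0) \geq 1/2$ there is a zero in $\bar{\mC}_+$, which is the contrapositive of the "if" direction and simultaneously proves the "only if" direction.

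Conversely, for the "if" direction I would simply run the same chain of equivalences backwards: if $\real(w_0) < 1/2$ then $|1 - 1/w_0| > 1$, hence $e^{-\tau s} = 1 - 1/w_0$ is unsolvable on $\bar{\mC}_+$ because the left side has modulus at most $1$ there, so $h(s) \neq 0$ throughout $\bar{\mC}_+$. One should also note $h$ is entire (or at least analytic and bounded on $\bar{\mC}_+$), so "nonzero in $\bar{\mC}_+$" is exactly the statement obtained. I would write the argument so that the single modulus computation $|w_0 - 1|^2 \leq |w_0|^2 \iff \real(w_0) \geq 1/2$ does double duty for both directions.

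The only mild subtlety — and the place to be careful rather than a genuine obstacle — is confirming that the image of $\bar{\mC}_+$ under $s \mapsto e^{-\tau s}$ is exactly the closed punctured unit disk $\{\zeta : 0 < |\zeta| \leq 1\}$, so that solvability of $e^{-\tau s} = \zeta$ on $\bar{\mC}_+$ is equivalent to $0 < |\zeta| \leq 1$; and checking that $\zeta = 1 - 1/w_0 = 0$ cannot occur for finite $w_0$, so the puncture at the origin is irrelevant. These are elementary once stated explicitly. I expect no step to be a real obstacle; the whole lemma reduces to the one-line inequality $|w_0-1| \leq |w_0| \iff \real(w_0) \geq 1/2$ together with the surjectivity remark about the exponential.
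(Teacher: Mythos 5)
Your argument is essentially the paper's. The paper likewise reduces the question to whether $1-w_0$ lies in the image of $\bar{\mC}_+$ under $s\mapsto w_0e^{-\tau s}$ and then invokes the same one\-/line computation $|1-w_0|\le|w_0|\iff\real(w_0)\ge 1/2$; the only cosmetic differences are that you divide through by $w_0$ first (the paper does not) and that the paper splits off $\real(w_0)<0$ as a trivial preliminary case.

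There is, however, one concrete error in the step you flagged as a ``mild subtlety'': the value $\zeta=1-1/w_0=0$ \emph{can} occur for finite $w_0$, namely at $w_0=1$. And there the puncture genuinely matters: $h(s)=-e^{-\tau s}$ is nonvanishing on all of $\bar{\mC}_+$ even though $\real(w_0)=1\ge 1/2$, so the ``only if'' direction of the lemma, as literally stated, fails at $w_0=1$, and your proof of that direction has a hole at exactly that point. For every other $w_0$ with $\real(w_0)\ge 1/2$ your argument is sound, since then $0<|1-1/w_0|\le 1$ and the exponential does attain that value on $\bar{\mC}_+$. You share this blind spot with the paper itself, whose proof asserts $\{w_0e^{-\tau s}\mid s\in\bar{\mC}_+\}=|w_0|\bar{\mathbb{D}}$ when the image is in fact the punctured disk $|w_0|\bar{\mathbb{D}}\setminus\{0\}$. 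The discrepancy is harmless for the lemma's intended use --- the paper needs $1/h\in\Hinf$, i.e.\ $|h|$ bounded away from zero on $\bar{\mC}_+$, and at $w_0=1$ one still has $\inf_{\bar{\mC}_+}|h|=0$ as $\real(s)\to\infty$ --- but your write\-/up should either exclude $w_0=1$ explicitly or restate the conclusion in terms of $|h|$ being bounded below, rather than assert that $1-1/w_0=0$ is impossible.
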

\begin{proof}
Suppose $\tau >0$. If $\real(w_0)<0$, $h(s)$ is trivially nonzero for all $s\in\bar{\mC}_+$. Consequently we need only  consider the case $\real(w_0) \geq 0$. Then $\{w_0e^{-\tau s}\mid s\in\bar{\mC}_+\}= |w_0|\bar{\mathbb{D}}$, where $\bar{\mathbb{D}}$ is the closed unit disc $\{ s \in \mC \mid |s| \leq 1 \}$. Therefore $h(s)$ is nonzero if and only if $1 - w_0 \not \in |w_0| \bar{\mathbb{D}}$, which is true if and only if $|1 - w_0| > |w_0|$ which in turn is true if and only if  $\real(w_0) < 1/2$.
\end{proof}


\subsection{Computing $\absfuncwzero(\omega)$}
Since $\sup_x f(x) = \inf_x 1/f(x)$ we have that
\[
\absfuncwzero(\omega)^{-1} = \inf_{\tau \in [0, \bar{\tau}]} \left| w_0 - g(\omega,\tau) \right|,
\]
where $g(\omega,\tau):= (1-e^{-i\omega\tau})^{-1}$.
Introducing the set $$A_{\bar{\tau}}(\omega) := \big\{g(\omega,\tau) \mid \tau\in [0, \bar{\tau}
] \big\}$$ for each $\omega$,
\[
\absfuncwzero(\omega)^{-1} ={\rm dist}\Big(w_0 \;, \; A_{\bar{\tau}}(\omega) \Big),
\]
where ${\rm dist}(s_1, C) := \inf_{s_2 \in C} | s_1 - s_2|$
denotes the distance between a point and a set. Next we note that
\[
g(\omega,\tau)
= \frac{1}{2} - \frac{i}{2}\frac{\sin(\omega \tau)}{1-\cos(\omega \tau)}
= \frac{1}{2} - \frac{i}{2} \cot \left( \frac{\omega \tau}{2} \right),
\]
so $\Im \big( g(\omega,\tau) \big)$ is  a monotone increasing function of the product $\omega\tau$ in any interval $(0, 2\pi ) + k \cdot 2\pi$, $k \in \mathbb{Z}$.  Moreover,
\begin{displaymath}
g(\omega,\tau)-w_0=\left[\frac12 -\Re(w_0)\right] +i\left[ \Im \big(g(\omega,\tau)\big) -\Im(w_0)\right],
\end{displaymath}
where the real part is positive since we need $\real(w_0) < 1/2$ by Lemma~\ref{lem:Rew0<1/2}.  
Therefore $|g(\omega,\tau)-w_0|$ will take a minimum value when $\left|\Im \big(g(\omega, \tau)\big) -\Im(w_0)\right|$ is as small as possible. For a fixed $\omega \geq 0$, consider three cases. First, if $\omega \leq 2\pi/\bar{\tau}$ and if $\Im \big(g(\omega, \bar{\tau} )\big) \geq \Im(w_0)$, then, since $\Im \big(g(\omega, \tau)\big)$ is monotone increasing in $\tau$, $1/2-\Im(w_0)\in A_{\bar{\tau}}(\omega)$ and 
$|g(\omega,\tau)-w_0|\geq \frac12 - \Re(w_0)$, and hence 
\begin{align*}
{\rm dist}\Big(w_0 \;, \; A_{\bar{\tau}
}(\omega) \Big) = \frac{1}{2} - \Re(w_0).
\end{align*}
Second, if $\omega > 2\pi/\bar{\tau}$ the argument can be reduced to the above one by noticing that $\Im \big( g(\omega,\tau) \big)$ is $2\pi$-periodic in $\omega \tau$ and that $[0, 2\pi] \subset \{ \omega \tau \mid \tau \in [0, \bar{\tau}] \}$. Third,
if $\Im \big(g(\omega,\bar{\tau})\big) < \Im(w_0)$, then the minimum will be obtained for $\tau =\bar{\tau}$, so 
\[
{\rm dist}\Big(w_0 \;, \; A_{\bar{\tau}}(\omega) \Big) =  |w_0 - g(\omega,\bar\tau)|.
\]
In the same manner we obtain the analogous results for negative $\omega$.
Now define $\bar{\omega}_+\in (0,2\pi/\bar{\tau})$  to be the value of $\omega$ for which $\Im \big(g(\bar{\omega}_+,\bar{\tau})\big) = \Im(w_0)$, and let $\bar{\omega}_-\in (-2\pi/\bar{\tau},0)$ be the corresponding negative value. These are the frequencies at which $\absfuncwzero^{-1}$ changes form. Moreover, they can be computed by using $\bar{\omega}$ as in Section~\ref{sec:improved_method}.


\balance
\bibliographystyle{plain}

\bibliography{ref}

\end{document}